\newtheorem{theorem}{Theorem}[section]
\theoremstyle{definition}
\newtheorem{corollary}[theorem]{Corollary}
\theoremstyle{remark}
\newtheorem{remark}[theorem]{Remark}
\numberwithin{equation}{section}
\begin{document}

\title{A note on the completeness of $\mathcal{C}_{c}\left(X,Y\right)$}

\author{Jan Harm van der Walt}
\address{Department of Mathematics and Applied Mathematics, University of
Pretoria, Pretoria 0002, South Africa}
\email{janharm.vanderwalt@up.ac.za}


\subjclass[2000]{Primary 46A19, 46A32; Secondary 46E10}

\date{January 22, 2010}


\keywords{Convergence space, Continuous convergence, Completeness}

\begin{abstract}
It is known that there are complete, Hausdorff and regular
  convergence vector spaces $X$ and $Y$ such that
  $\mathcal{L}_{c}\left(X,Y\right)$, the space of continuous
  linear mappings from $X$ into $Y$ equipped with the continuous
  convergence structure, is not complete.  In this paper, we give
  sufficient conditions on a convergence vector space $Y$ such
  that $\mathcal{C}_{c}\left(X,Y\right)$ is complete for any
  convergence space $X$.  In particular, we show that this is true
  for every complete and Hausdorff topological vector space $Y$.
\end{abstract}

\maketitle

\section{Introduction}

It is well known \cite{Binz} that $\mathcal{C}_{c}(X)$, the space
of continuous, scalar-valued functions on a convergence space $X$
equipped with the continuous convergence structure, is a complete
convergence vector space.  An immediate consequence of this fact,
see \cite{Beattie}, is that the continuous dual $\mathcal{L}_{c}X$
of a convergence vector space $X$ is complete.  On the other hand,
Butzmann \cite{Butzmann} gave an example of Hausdorff, regular and
complete convergence vector spaces $X$ and $Y$ such that the
convergence vector space $\mathcal{L}_{c}\left(X,Y\right)$ is not
complete. Here, as is standard in the literature, see for instance
\cite{Beattie}, we denote by $\mathcal{L}(X,Y)$ the vector space
of continuous linear mappings from $X$ into $Y$, and
$\mathcal{L}_{c}(X,Y)$ denotes this space equipped with the
continuous convergence structure.

In this paper, we show that if $Y$ is complete, Hausdorff and
topological, then $\mathcal{C}_{c}(X,Y)$ is complete for every
convergence space $X$.  An immediate consequence is that
$\mathcal{L}_{c}(X,Y)$ is complete whenever $X$ and $Y$ are
convergence vector spaces with $Y$ Hausdorff, complete and
topological. This is essentially known in the locally convex case
\cite{Beattie}, \cite{Binz}.

Indeed, if $Y$ is locally convex, Hausdorff and complete, then $Y$
is isomorphic to $\mathcal{L}_{c}\mathcal{L}_{c}Y$, which is a
closed subspace of $\mathcal{C}_{c}(\mathcal{L}_{c}Y)$.  Thus
$\mathcal{L}_{c}(X,Y)$ is isomorphic to a closed subspace of
$\mathcal{C}_{c}\left(X,\mathcal{C}_{c}(\mathcal{L}_{c}Y)\right)$.
By the Universal Property of the continuous convergence structure,
$\mathcal{C}_{c}\left(X,\mathcal{C}_{c}(\mathcal{L}_{c}Y)\right)$
is isomorphic to $\mathcal{C}_{c}\left(X\times
\mathcal{L}_{c}Y\right)$, which is complete \cite{Binz}.  Hence
$\mathcal{L}_{c}(X,Y)$ is a closed subspace of a complete
convergence vector space, and is therefore complete.

\section{A completeness result}

We now show that the following more result holds.  This result
generalizes \cite[Theorem 3.1.15]{Beattie}.
\begin{theorem}\label{T1}
Let $X$ be a convergence space and $Y$ a Hausdorff, complete
topological vector space.  Then $\mathcal{C}_{c}(X,Y)$ is
complete.
\end{theorem}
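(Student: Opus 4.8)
The plan is to show that every Cauchy filter on $\mathcal{C}_c(X,Y)$ converges. Recall that the continuous convergence structure is the coarsest one making the evaluation map $\mathcal{C}_c(X,Y) \times X \to Y$ continuous, and that a filter $\mathcal{F}$ on $\mathcal{C}_c(X,Y)$ converges to $f$ iff for every $x \in X$ and every filter $\mathcal{G} \to x$ on $X$, the image filter $\operatorname{ev}(\mathcal{F} \times \mathcal{G})$ converges to $f(x)$ in $Y$. So, given a Cauchy filter $\mathcal{F}$ on $\mathcal{C}_c(X,Y)$, I first need to (i) identify the candidate limit $f$, (ii) verify $f \in \mathcal{C}(X,Y)$, i.e.\ that $f$ is continuous, and (iii) verify $\mathcal{F} \to f$ in the continuous convergence structure.

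**For step (i),** I would fix $x \in X$ and show that $\mathcal{F}(x) := \{\,F(x) : F \in \mathcal{F}\,\}$ (the image of $\mathcal{F}$ under the evaluation-at-$x$ map, which is continuous $\mathcal{C}_c(X,Y) \to Y$) is a Cauchy filter on $Y$: indeed, the evaluation-at-$x$ map is linear and continuous, hence uniformly continuous as a map of topological vector spaces, so it carries the Cauchy filter $\mathcal{F}$ to a Cauchy filter on $Y$. Since $Y$ is complete this has a limit, and since $Y$ is Hausdorff the limit is unique; call it $f(x)$. This defines $f : X \to Y$. The key point where completeness and the topological (in fact, uniform) structure of $Y$ enter is precisely here — on a general convergence vector space there need not be a uniformity, so ``Cauchy'' pointwise images need not converge.

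**For step (iii)** (which I would do before (ii), as continuity of $f$ will fall out), fix $x \in X$ and a filter $\mathcal{G}$ on $X$ with $\mathcal{G} \to x$; I must show $\operatorname{ev}(\mathcal{F} \times \mathcal{G}) \to f(x)$ in $Y$. Write $\mathcal{U}$ for a base of closed balanced neighborhoods of $0$ in $Y$. Given $U \in \mathcal{U}$, pick $V \in \mathcal{U}$ with $V + V + V \subseteq U$. Because $\mathcal{F}$ is Cauchy there is $F_0 \in \mathcal{F}$ with $F_0 - F_0$ contained in the continuous-convergence basic entourage determined by $V$ and the filter $\mathcal{G}$ — concretely, there is $F_0 \in \mathcal{F}$ and $G_0 \in \mathcal{G}$ with $g(y) - g'(y) \in V$ for all $g, g' \in F_0$ and all $y \in G_0$. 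Fix any $g \in F_0$. By continuity of $g$ at $x$, shrinking $G_0$ we may also assume $g(y) - g(x) \in V$ for $y \in G_0$. Finally, by step (i) and the definition of $f(x)$, for each fixed $y$ we have $g'(y) \to f(y)$ along $g' \in \mathcal{F}$; combined with the uniform estimate over $F_0$ and the fact that $V$ is closed, one gets $g(y) - f(y) \in V$ for all $y \in G_0$, and in particular $g(x) - f(x) \in V$. Then for $g' \in F_0$ and $y \in G_0$,
\[
g'(y) - f(x) = \bigl(g'(y) - g(y)\bigr) + \bigl(g(y) - g(x)\bigr) + \bigl(g(x) - f(x)\bigr) \in V + V + V \subseteq U.
\]
Hence $\operatorname{ev}(F_0 \times G_0) \subseteq f(x) + U$, so $\operatorname{ev}(\mathcal{F}\times\mathcal{G}) \to f(x)$. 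Taking $\mathcal{G}$ to be the (convergent) neighborhood filter of $x$ and using that $\mathcal{F} \neq \emptyset$ shows $f$ is continuous at $x$, giving step (ii); and the general $\mathcal{G}$ computation is exactly the statement $\mathcal{F} \to f$ in $\mathcal{C}_c(X,Y)$.

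**The main obstacle** I anticipate is bookkeeping in step (iii): correctly passing from the ``Cauchy in continuous convergence'' hypothesis to a single filter member $F_0$ and a single $G_0 \in \mathcal{G}$ that simultaneously controls the oscillation of all $g' \in F_0$ over all $y \in G_0$, and then legitimately taking the limit along $\mathcal{F}$ inside the \emph{closed} set $V$ while keeping $y$ ranging over $G_0$. Using closed (rather than open) neighborhoods $U, V$ is what makes the limiting step clean. One should also check the easy linearity/vector-space-structure compatibility so that $f$ indeed lies in $\mathcal{C}(X,Y)$ as an object of the convergence vector space, but this is routine once the topological-vector-space axioms of $Y$ are invoked.
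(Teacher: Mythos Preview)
Your approach is essentially the paper's: identify the pointwise limit $f$ using completeness of $Y$, exploit a base of \emph{closed} $0$-neighbourhoods in $Y$ to pass to the limit inside the uniform Cauchy estimate, and combine with continuity of a fixed $g\in F_0$ via a three-term triangle inequality. The paper organises the same estimates slightly differently (continuity of $f$ first, then convergence of the filter), but the substance is identical.

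Two points need correction. First, your derivation of step~(ii) is not valid as written: $X$ is merely a convergence space, so there is in general no ``neighbourhood filter of $x$'' to invoke, and in any case $\operatorname{ev}(\mathcal{F}\times\mathcal{G})\to f(x)$ does not by itself give $f(\mathcal{G})\to f(x)$, since $f$ need not belong to any member of $\mathcal{F}$. The fix is already contained in your own estimates: from $f(y)-g(y)\in V$, $g(y)-g(x)\in V$, and $g(x)-f(x)\in V$ for $y\in G_0$ you get $f(y)-f(x)\in V+V+V\subseteq U$, hence $f(G_0)\subseteq f(x)+U$, which is precisely continuity of $f$ along $\mathcal{G}$. (For the ``in particular $g(x)-f(x)\in V$'' step you must arrange $x\in G_0$; this is legitimate because one may replace $\mathcal{G}$ by the finer filter $\mathcal{G}\cap[x]$, which still converges to $x$---the paper makes this reduction explicitly.) Second, your closing remark about checking ``linearity/vector-space-structure compatibility'' is misplaced: $\mathcal{C}(X,Y)$ is the space of continuous (not linear) maps, so once $f$ is continuous there is nothing further to verify.
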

\begin{proof}
Let $\Phi$ be a Cauchy filter on $\mathcal{C}_{c}(X,Y)$ so that
\begin{eqnarray}
\begin{array}{ll}
\forall & x\in X\mbox{ :} \\
\forall & \mathcal{F}\in\lambda_{X}(x)\mbox{ :} \\
& \omega_{X,Y}\left(\mathcal{F},\Phi-\Phi\right)\in\lambda_{Y}\left(0\right) \\
\end{array},\label{CFilter}
\end{eqnarray}
where $\omega_{X,Y}:X\times \mathcal{C}(X,Y)\rightarrow Y$ is the
evaluation mapping, defined through
\begin{eqnarray}
\omega_{X,Y}\left(x,f\right) = f\left(x\right).\nonumber
\end{eqnarray}
In particular, upon setting $\mathcal{F}=[x]$ in (\ref{CFilter})
we obtain
\begin{eqnarray}
\Phi\left(x\right)-\Phi\left(x\right) = \Phi\left([x]\right) -
\Phi\left([x]\right) = \omega_{X,Y}\left([x],\Phi-\Phi\right) \in
\lambda_{Y}(0)\nonumber
\end{eqnarray}
for every $x\in X$.  Therefore $\Phi(x)$ is a Cauchy filter in $Y$
for every $x\in X$.  Since $Y$ is complete and Hausdorff, it
follows that
\begin{eqnarray}
\begin{array}{ll}
\forall & x\in X\mbox{ :} \\
\exists ! & x_{\Phi}\in Y\mbox{ :} \\
& \Phi(x)\in \lambda_{Y}\left(x_{\Phi}\right) \\
\end{array}.\nonumber
\end{eqnarray}
Define the mapping $f:X\rightarrow Y$ through
\begin{eqnarray}
f:X\ni x\mapsto x_{\Phi}\in Y.\label{PhiLim}
\end{eqnarray}
We show that $f$ is continuous.  Note that, since $Y$ is
topological, there is a collection $\mathcal{B}$ of closed subsets
of $Y$ such that filter $\mathcal{G} = [\mathcal{B}]$ converges to
$0$ and
\begin{eqnarray}
\begin{array}{ll}
\forall & \mathcal{F}\in\lambda_{Y}(0)\mbox{ :} \\
& \mathcal{G}\subseteq \mathcal{F} \\
\end{array}.\label{Assumption}
\end{eqnarray}
Let $\mathcal{F}$ converge to $x_{0}\in X$.  Without loss of
generality, we may assume that $\mathcal{F}\subseteq [x_{0}]$.
Since the filter $\omega_{X,Y}\left(\mathcal{F},\Phi-\Phi\right)$
converges to $0$ in $Y$, it follows by (\ref{Assumption}) that
$\mathcal{G}\subseteq
\omega_{X,Y}\left(\mathcal{F},\Phi-\Phi\right)$. We therefore have
\begin{eqnarray}
\begin{array}{ll}
\forall & B\in\mathcal{B}\mbox{ :} \\
\exists & A_{B}\in \Phi\mbox{ :} \\
\exists & F_{B}\in\mathcal{F}\mbox{ :} \\
& \omega_{X,Y}\left(F_{B},A_{B}-A_{B}\right)\subseteq B \\
\end{array}\nonumber
\end{eqnarray}
so that
\begin{eqnarray}
\left(A_{B}-A_{B}\right)\left(F_{B}\right) = \left\{
g(x)-h(x)\mbox{ }\begin{array}{|l}
g,h\in A_{B} \\
x\in F_{B} \\
\end{array} \right\}\subseteq B.\nonumber
\end{eqnarray}
In particular,
\begin{eqnarray}
\begin{array}{ll}
\forall & x\in F_{B}\mbox{ :} \\
\forall & g\in A_{B}\mbox{ :} \\
& A_{B}\left(x\right) = \{h(x)\mbox{ : }h\in A_{B}\}\subseteq g\left(x\right) + B \\
\end{array}\label{Eq1}
\end{eqnarray}
Since $f\left(x\right)$ is defined as the limit of $\Phi(x)$ in
$Y$, it follows that $f\left(x\right) \in
a_{Y}\left(A_{B}\left(x\right)\right)$, where $a_{Y}$ denotes the
adherence operator in $Y$.  Since $B$ is closed in $Y$, it follows
from (\ref{Eq1}) that
\begin{eqnarray}
\begin{array}{ll}
\forall & g\in A_{B}\mbox{ :} \\
\forall & x\in F_{B}\mbox{ :} \\
& f\left(x\right)\in g\left(x\right) + B \\
\end{array}.\label{Eq2}
\end{eqnarray}
For every $B\in\mathcal{B}$ and $A\in \Phi$, pick some $g\in
A_{B}\cap A$. Since $g$ is continuous, the filter
$g\left(\mathcal{F}\right)$ converges to $g\left(x_{0}\right)$ in
$Y$.  It now follows from (\ref{Assumption}) that
$\mathcal{G}+g\left(x_{0}\right)\subseteq
g\left(\mathcal{F}\right)$.  Fix $B\in\mathcal{B}$.  Then
\begin{eqnarray}
\begin{array}{ll}
\exists & F_{B,0}\in \mathcal{F}\mbox{ :} \\
& g\left(F_{B,0}\right)\subseteq B + g\left(x_{0}\right) \\
\end{array}\nonumber
\end{eqnarray}
so that (\ref{Eq2}) implies
\begin{eqnarray}
f\left(F_{B}\cap F_{B,0}\right)\subseteq
B+g\left(x_{0}\right)\subseteq \left(A_{B}\cap
A\right)\left(x_{0}\right) + B\subseteq A(x_{0})+B.\nonumber
\end{eqnarray}
Since $B\in\mathcal{B}$ and $A\in\Phi$ were arbitrary, it follows
that $\Phi\left(x_{0}\right) + \mathcal{G}\subseteq
f\left(\mathcal{F}\right)$.  By definition,
$\Phi\left(x_{0}\right)$ converges to $f\left(x_{0}\right)$, and
since $\mathcal{G}$ converges to $0$ it follows that
$f\left(\mathcal{F}\right)$ converges to $f\left(x_{0}\right)$
which shows that $f$ is continuous.\\ \\
Now we show that $\Phi$ converges continuously to $f$.  Choose
$x_{0}\in X$ and $\mathcal{F}\in\lambda_{X}\left(x_{0}\right)$ as
above so that we have
\begin{eqnarray}
\begin{array}{ll}
\forall & B\in\mathcal{B}\mbox{ :} \\
\exists & A_{B}\in\Phi\mbox{ :} \\
\exists & F_{B}\in\mathcal{F}\mbox{ :} \\
& g\in A_{B}, x\in F_{B}\Rightarrow f\left(x\right)\in g\left(x\right) + B \\
\end{array}.\label{Eq3}
\end{eqnarray}
Since $f$ is continuous, we also have
\begin{eqnarray}
\begin{array}{ll}
\forall & B\in\mathcal{B}\mbox{ :} \\
\exists & F_{B,0}\in\mathcal{F}\mbox{ :} \\
& f\left(F_{B,0}\right)\subseteq f\left(x_{0}\right) + B \\
\end{array}.\label{Eq4}
\end{eqnarray}
From (\ref{Eq3}) and (\ref{Eq4}) it follows that
\begin{eqnarray}
\begin{array}{ll}
\forall & x\in F_{B}\cap F_{B,0}\mbox{ :} \\
& A_{B}\left(x\right)\subseteq f\left(x\right) - B\subseteq f\left(x_{0}\right) +B-B. \\
\end{array}.\nonumber
\end{eqnarray}
Therefore
\begin{eqnarray}
\omega_{X,Y}\left(F_{B}\cap F_{B,0},A_{B}\right)\subseteq
f\left(x_{0}\right) +B-B.\nonumber
\end{eqnarray}
Consequently, $[f\left(x_{0}\right)]+\mathcal{G}-\mathcal{G}
\subseteq \omega_{X,Y}\left(\Phi,\mathcal{F}\right)$ so that
$\omega_{X,Y}\left(\Phi,\mathcal{F}\right)$ converges to
$f\left(x_{0}\right)$.  Since $x_{0}\in X$ was chosen arbitrary,
it follows that $\Phi$ converges continuously to $f$.  This
completes the proof.
\end{proof}
\begin{corollary}
If $X$ and $Y$ are convergence vector spaces, $Y$ Hausdorff,
complete and topological, then $\mathcal{L}_{c}\left(X,Y\right)$
is complete.
\end{corollary}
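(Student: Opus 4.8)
The plan is to exhibit $\mathcal{L}_{c}(X,Y)$ as a closed linear subspace of $\mathcal{C}_{c}(X,Y)$ and then to combine Theorem~\ref{T1} with the standard fact that a closed subspace of a complete convergence vector space is itself complete. Every continuous linear map is continuous, so $\mathcal{L}(X,Y)\subseteq\mathcal{C}(X,Y)$; moreover the continuous convergence structure on $\mathcal{L}_{c}(X,Y)$ coincides with the one induced from $\mathcal{C}_{c}(X,Y)$, since both are defined through the evaluation map $\omega_{X,Y}$. By Theorem~\ref{T1}, $\mathcal{C}_{c}(X,Y)$ is complete. Hence the corollary will follow once we show that $\mathcal{L}(X,Y)$ is closed in $\mathcal{C}_{c}(X,Y)$.

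To prove closedness I would take a filter $\Phi$ on $\mathcal{L}(X,Y)$ that converges continuously to some $f\in\mathcal{C}(X,Y)$ and show that $f$ is linear. Since continuous convergence is finer than pointwise convergence, $\Phi(x)=\omega_{X,Y}([x],\Phi)\in\lambda_{Y}(f(x))$ for each $x\in X$. Now fix $x,y\in X$ and a scalar $\alpha$. The maps $g\mapsto g(x+y)$ and $g\mapsto g(x)+g(y)$ agree on $\mathcal{L}(X,Y)$, hence carry $\Phi$ to one and the same filter on $Y$, and this filter is finer than the image under addition of the product filter $\Phi(x)\times\Phi(y)$. Because $Y$ is a convergence vector space, addition $Y\times Y\rightarrow Y$ is continuous and $\Phi(x)\times\Phi(y)$ converges to $(f(x),f(y))$, so this common filter converges to $f(x)+f(y)$; but it equals $\Phi(x+y)$, which converges to $f(x+y)$. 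Since $Y$ is Hausdorff, $f(x+y)=f(x)+f(y)$. The same argument with scalar multiplication in place of addition yields $f(\alpha x)=\alpha f(x)$. Thus $f$ is a continuous linear map, so $f\in\mathcal{L}(X,Y)$, and $\mathcal{L}(X,Y)$ is closed in $\mathcal{C}_{c}(X,Y)$.

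Finally, given a Cauchy filter $\Phi$ on $\mathcal{L}_{c}(X,Y)$, when viewed in $\mathcal{C}_{c}(X,Y)$ it is still Cauchy, hence converges continuously to some $f\in\mathcal{C}(X,Y)$ by Theorem~\ref{T1}; by the previous paragraph $f\in\mathcal{L}(X,Y)$, and since $\mathcal{L}_{c}(X,Y)$ carries the structure induced from $\mathcal{C}_{c}(X,Y)$, the filter $\Phi$ converges to $f$ in $\mathcal{L}_{c}(X,Y)$. This proves completeness. The only delicate point, and the main obstacle, is the verification that the continuous limit $f$ of linear maps is again linear; this is precisely where both the continuity of the vector space operations of $Y$ (to upgrade pointwise convergence of $\Phi$ to convergence of the summed and scaled filters) and the Hausdorffness of $Y$ (to identify the two candidate limits) are genuinely used, which is unsurprising given that these are exactly the hypotheses in the statement.
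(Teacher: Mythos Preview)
Your argument is correct and matches the paper's intended approach: the paper states the corollary without proof, treating it as an immediate consequence of Theorem~\ref{T1} via the standard fact that $\mathcal{L}_{c}(X,Y)$ sits as a closed subspace of the complete space $\mathcal{C}_{c}(X,Y)$. You have simply spelled out the closedness verification in detail, and that verification is sound.
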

\begin{remark}
It should be noted that the proof of Theorem \ref{T1} given here
cannot be used in the case of a nontopological range space $Y$.
Indeed, the proof depends heavily on the existence of a filter
$\mathcal{G}$, with a basis of closed sets, which converges to $0$
in $Y$ and satisfies
\begin{eqnarray}
\begin{array}{ll}
\forall & \mathcal{F}\in\lambda_{Y}\left(0\right)\mbox{ :} \\
& \mathcal{G}\subseteq \mathcal{F} \\
\end{array}.\nonumber
\end{eqnarray}
Clearly the existence of such a filter implies that $Y$ is
pretopological, and hence topological.
\end{remark}

While the techniques used in the proof of Theorem \ref{T1} does
not apply to nontopological spaces, similar arguments suffice if
$Y$ is replaced with a complete, Hausdorff and commutative
topological group.  In particular, the following is true.
\begin{theorem}
Let $X$ be a convergence space, and $Y$ a complete, Hausdorff
commutative topological group.  Then
$\mathcal{C}_{c}\left(X,Y\right)$ is a complete convergence group.
\end{theorem}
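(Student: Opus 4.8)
The plan is to mirror the proof of Theorem \ref{T1} almost verbatim, replacing the role of the topological vector space structure of $Y$ with the topological group structure, and being careful that only the group operations (addition and inversion/subtraction) and the neighbourhood filter of the identity are used. Concretely, let $0$ denote the identity of $Y$, written additively. Since $Y$ is a topological group, the neighbourhood filter $\mathcal{U}$ of $0$ has a basis of closed sets (every topological group is regular), and a filter $\mathcal{F}$ on $Y$ converges to $0$ if and only if $\mathcal{U}\subseteq\mathcal{F}$; thus the filter $\mathcal{G}=[\mathcal{B}]$ with $\mathcal{B}$ a basis of closed neighbourhoods of $0$ plays exactly the role it played before, and property (\ref{Assumption}) holds. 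Given a Cauchy filter $\Phi$ on $\mathcal{C}_{c}(X,Y)$, meaning (\ref{CFilter}) holds with $\lambda_Y(0)$ the convergence of the group $Y$, the specialization $\mathcal{F}=[x]$ again shows $\Phi(x)-\Phi(x)\in\lambda_Y(0)$, so each $\Phi(x)$ is a Cauchy filter in $Y$; completeness and Hausdorffness of $Y$ give a unique limit $x_\Phi$, and we set $f(x)=x_\Phi$ as in (\ref{PhiLim}).

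Next I would repeat, line for line, the continuity argument for $f$: pick $\mathcal{F}\in\lambda_X(x_0)$ with $\mathcal{F}\subseteq[x_0]$; from (\ref{Assumption}) get $\mathcal{G}\subseteq\omega_{X,Y}(\mathcal{F},\Phi-\Phi)$, hence for each $B\in\mathcal{B}$ sets $A_B\in\Phi$, $F_B\in\mathcal{F}$ with $(A_B-A_B)(F_B)\subseteq B$, yielding (\ref{Eq1}) $A_B(x)\subseteq g(x)+B$ and then, since $B$ is closed and $f(x)\in a_Y(A_B(x))$, the inclusion (\ref{Eq2}). Then, choosing $g\in A_B\cap A$ and using continuity of $g$ together with (\ref{Assumption}) applied to $g(\mathcal{F})\to g(x_0)$, one obtains $f(F_B\cap F_{B,0})\subseteq B+g(x_0)\subseteq A(x_0)+B$, hence $\Phi(x_0)+\mathcal{G}\subseteq f(\mathcal{F})$, and since $\Phi(x_0)\to f(x_0)$ and $\mathcal{G}\to 0$ and addition is continuous in the group $Y$, $f(\mathcal{F})\to f(x_0)$; thus $f\in\mathcal{C}(X,Y)$. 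The same bookkeeping then shows $\Phi$ converges continuously to $f$: from (\ref{Eq3}) and the continuity of $f$ (\ref{Eq4}) one gets $A_B(x)\subseteq f(x_0)+B-B$ on $F_B\cap F_{B,0}$, so $[f(x_0)]+\mathcal{G}-\mathcal{G}\subseteq\omega_{X,Y}(\Phi,\mathcal{F})$, which converges to $f(x_0)$ since $\mathcal{G}-\mathcal{G}\to 0$ in the group. Finally, one checks $\mathcal{C}_c(X,Y)$ is itself a convergence group: pointwise sum and difference of continuous maps are continuous because $Y$ is a topological group, and the continuous convergence structure is compatible with these operations by the exponential law / universal property, exactly as in the vector space case.

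The main obstacle — really the only place where one must think rather than transcribe — is to verify that every step above uses \emph{only} the group structure of $Y$ and never scalar multiplication or local convexity. The two facts that carry the whole argument are: (i) a topological group is regular, so $0$ has a neighbourhood basis of closed sets, giving the distinguished filter $\mathcal{G}$ and property (\ref{Assumption}); and (ii) addition and subtraction in $Y$ are continuous, so that sums and differences of convergent filters converge to the sums and differences of the limits — which is what legitimizes each passage of the form "$\mathcal{G}\subseteq\mathcal{F}$ and hence $a+\mathcal{G}\subseteq\mathcal{F}$ when $\mathcal{F}\to a$" and each conclusion "$f(\mathcal{F})\to f(x_0)$". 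Both (i) and (ii) are standard, so once they are invoked the proof of Theorem \ref{T1} transfers with $\mathbb{K}$-linear structure erased. I would therefore write the proof as: "The argument is that of Theorem \ref{T1}, using that a topological group is regular (so that a filter $\mathcal{G}$ with a basis of closed sets and satisfying (\ref{Assumption}) exists) and that addition is jointly continuous on $Y$; one checks that no other properties of the vector space structure are used, and that $\mathcal{C}_c(X,Y)$ is a convergence group by the universal property of the continuous convergence structure." A referee-proof version would spell out the handful of displays above with $\lambda_Y$ now the group convergence, but no genuinely new idea is needed.
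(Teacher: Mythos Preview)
Your proposal is correct and is exactly the approach the paper takes: the paper does not write out a proof at all, stating only that it ``is based on almost exactly the same arguments used to verify Theorem \ref{T1},'' and your sketch supplies precisely those arguments, noting (correctly) that the only properties of $Y$ actually used are regularity (giving the closed-based filter $\mathcal{G}$ satisfying (\ref{Assumption})) and joint continuity of the group operations. Nothing further is needed.
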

Since the proof is based on almost exactly the same arguments used
to verify Theorem \ref{T1} we do not give it here.

Lastly, we mention that the completeness result for
$\mathcal{L}_{c}(X,Y)$, with $Y$ a complete locally convex space,
or more generally any continuously reflexive convergence vector
space \cite{Beattie}, mentioned earlier, has been used
successfully in infinite dimensional analysis, see for instance
\cite{Bjon} and \cite{NelI}.  Our results may therefore have a
wide range of applicability in analysis on non locally convex
spaces \cite{Bayoumi}.

\end{document}